\def\NZQ{\mathbb}               
\def\ZZ{{\NZQ Z}}
\def\RR{{\NZQ R}}
\def\frk{\mathfrak}               
\def\Phi{{\frk N}}
\def\xb{{\bold x}}
\def\opn#1#2{\def#1{\operatorname{#2}}} 
\opn\chara{char} 
\opn\length{\ell} 
\opn\pd{pd} 
\opn\rk{rk}
\opn\projdim{proj\,dim} 
\opn\injdim{inj\,dim} 
\opn\rank{rank}
\opn\depth{depth} 
\opn\grade{grade} 
\opn\height{height}
\opn\embdim{emb\,dim} 
\opn\codim{codim}
\opn\Tr{Tr} 
\opn\bigrank{big\,rank}
\opn\superheight{superheight}
\opn\lcm{lcm}
\opn\trdeg{tr\,deg}
\opn\reg{reg} 
\opn\lreg{lreg} 
\opn\ini{in} 
\opn\lpd{lpd}
\opn\size{size}
\opn\mult{mult}
\opn\dist{dist}
\opn\cone{cone}
\opn\lex{lex}
\opn\rev{rev}
\opn\div{div} \opn\Div{Div} \opn\cl{cl} \opn\Cl{Cl}
\opn\Spec{Spec} \opn\Supp{Supp} \opn\supp{supp} \opn\Sing{Sing}
\opn\Ass{Ass} \opn\Min{Min}
\opn\Ann{Ann} \opn\Rad{Rad} \opn\Soc{Soc}
\opn\Syz{Syz} \opn\Im{Im} \opn\Ker{Ker} \opn\Coker{Coker}
\opn\Am{Am} \opn\Hom{Hom} \opn\Tor{Tor} \opn\Ext{Ext}
\opn\End{End} \opn\Aut{Aut} \opn\id{id} \opn\ini{in}
\opn\nat{nat}
\opn\pff{pf}
\opn\Pf{Pf} \opn\GL{GL} \opn\SL{SL} \opn\mod{mod} \opn\ord{ord}
\opn\Gin{Gin}
\opn\Hilb{Hilb}\opn\adeg{adeg}\opn\std{std}\opn\ip{infpt}
\opn\Pol{Pol}
\opn\sat{sat}
\opn\Var{Var}
\opn\Gen{Gen}
\opn\aff{aff} \opn\con{conv} \opn\relint{relint} \opn\st{st}
\opn\lk{lk} \opn\cn{cn} \opn\core{core} \opn\vol{vol}
\opn\link{link} \opn\star{star}
\opn\gr{gr}
\def\Oc{{\mathcal O}}
\def\Pc{{\mathcal P}}
\def\Cc{{\mathcal C}}
\def\vol{{\textnormal{vol}}}
\def\conv{{\textnormal{conv}}}
\def\ord{{\textnormal{ord}}}
\def\pot#1#2{#1[\kern-0.28ex[#2]\kern-0.28ex]}
\opn\dirlim{\underrightarrow{\lim}}
\opn\inivlim{\underleftarrow{\lim}}
\def\Implies{\ifmmode\Longrightarrow \else
	\unskip${}\Longrightarrow{}$\ignorespaces\fi}
\def\implies{\ifmmode\Rightarrow \else
	\unskip${}\Rightarrow{}$\ignorespaces\fi}
\def\iff{\ifmmode\Longleftrightarrow \else
	\unskip${}\Longleftrightarrow{}$\ignorespaces\fi}
\newtheorem{Theorem}{Theorem}[section]
\newtheorem{Lemma}[Theorem]{Lemma}
\newtheorem{Corollary}[Theorem]{Corollary}
\newtheorem{Example}[Theorem]{Example}
\newtheorem{Definition}[Theorem]{Definition}
\newtheorem{Question}[Theorem]{Question}
\numberwithin{equation}{section}
\let\epsilon\varepsilon
\let\phi=\varphi
\let\kappa=\varkappa
\opn\dis{dis}
\opn\height{height}
\opn\dist{dist}
\def\pnt{{\raise0.5mm\hbox{\large\bf.}}}
\opn\Lex{Lex}
\opn\conv{conv}
\newcommand\commentout[1]{ }
\begin{document}
	\title{Stanley's non-Ehrhart-positive order polytopes}
	
	\author{Fu Liu}
	\date{\today}
	\address{Department of Mathematics, University of California, One Shields Avenue, Davis, California 95616}
	\email{fuliu@math.ucdavis.edu}

	\author{Akiyoshi Tsuchiya}
	\address{Department of Pure and Applied Mathematics, Graduate School of Information Science and Technology, Osaka University, Suita, Osaka 565-0871, Japan}
	\email{a-tsuchiya@ist.osaka-u.ac.jp}
	
	\subjclass[2010]{05A15, 52B20}
	\date{\today}
	\keywords{Ehrhart polynomial, Ehrhart positivety, order polytope, Bernoulli number
	}
	\thanks{The first author was partially supported by partially supported by a grant from the Simons Foundation \#426756.
	The second author was partially supported by Grant-in-Aid for JSPS Fellows 16J01549.}
	
	\begin{abstract}
		We say a polytope is Ehrhart positive if all the coefficients in its Ehrhart polynomial are positive. Answering an Ehrhart positivity question posed on Mathoverflow, Stanley provided an example of a non-Ehrhart-positive order polytope of dimension $21$. 
Stanley's example comes from a certain family of order polytopes. 
In this paper, we study the Ehrhart positivity question on this family of polytopes.
By giving explicit formulas for the coefficients of the Ehrhart polynomials of these polytopes in terms of Bernolli numbers, we determine the sign of each Ehrhart coefficient of each polytope in the family.

As a consequence of our result, we conclude that for any positive integer $d \ge 21,$ there exists an order polytope of dimension $d$ that is not Ehrhart positive, and for any positive integer $\ell$, there exists an order polytope whose Ehrhart polynomial has precisely $\ell$ negative coefficients, which answers a question posed by Hibi.
We finish this article by discussing the existence of lower-dimensional order polytopes whose Ehrhart polynomials have a negative coefficient.
	\end{abstract}
	
	\maketitle	
	\section{Introduction}
	Let $\Pc \subset \RR^e$ be an {\em integral} convex polytope, that is, a convex polytope whose vertices have integer coordinates, of dimension $d$ (where $d \le e$). 
	Given integers $t=1,2,\ldots,$ we write $i(\Pc,t)$ for the number of integer points in $t\Pc$, where $t\Pc:=\{t \xb : \xb \in \Pc\}$ is the \emph{$t$th dilation} of $\Pc$.
	In other words, 
	\[
	i(\Pc,t)=|t\Pc \cap \ZZ^d|, \ \ t=1,2,\ldots.
	\]
	In the 1960's Ehrhart \cite{E62} discovered that $i(\Pc,t)$ is a polynomial in $t$ of degree $d.$
	Therefore, we call $i(\Pc,t)$ the {\em Ehrhart polynomial} of $\Pc$,
	and call the coefficients of $i(\Pc,t)$ the {\em Ehrhart coefficients} of $\Pc$. 

	It is well-known (we refer to the book \cite{CCD}) that the leading coefficient of $i(\Pc,t)$ equals the Euclidean volume of $\Pc$, the second highest coefficient equals half of the boundary volume of $\Pc$, and the constant term is always $1$.
	Thus, these three Ehrhart coefficients are always positive. We call the remaining Ehrhart coefficients, that is, the coefficients of $t^{d-2}, t^{d-3}, \dots, t^1$ of $i(\Pc,t)$, the \emph{middle Ehrhart coefficients} of $\Pc.$ 
	Unfortunately, it is not true that every middle Ehrhart coefficient is positive. One sees that middle Ehrhart coefficients only appear when $d \ge 3,$ and the first counterexample already comes up at dimension $3$ known as the Reeve's tetrahedron (see \cite[Example 3.22]{CCD}).
	Recently, it was shown in \cite{HHTY} that for each $d \ge 3,$ there exists an integral convex polytope of dimension $d$ such that all of its middle Ehrhart coefficients are simultaneously negative.

	We say that an integral convex polytope has {\em Ehrhart positivity} or is {\em Ehrhart positive} if all of its (middle) Ehrhart coefficients are positive. It is natural to ask the following question:
	\begin{Question}\label{ques:EP}
		Which families of integral convex polytopes have Ehrhart positivity?
	\end{Question}
	Quite a few families of polytopes, such as zonotopes \cite{zonotope}, the $y$-family of generalized permutohedra \cite{Post}, cyclic polytopes \cite{cyclic, high-integral} and flow polytopes arising from certain graphs \cite{MM}, are known to have this property.
	On the other hand, in addition to the aforementioned work \cite{HHTY} on examples of polytopes with negative Ehrhart coefficients for each dimension $d \ge 3,$ recent work has been done giving a negative answer to Question \ref{ques:EP} for certain families of polytopes by constructing examples in each sufficiently large dimension. For instance, as a consequence of work in \cite{smooth}, there exists a smooth polytope with a negative Ehrhart coefficient for each dimension $d \ge 3$ \cite[Theorem 1.2]{smooth}, and there exists a type-B generalized permutohedron that has a negative Ehrhart coefficient for each dimension $d \ge 7$ \cite[Proposition 1.3]{smooth}\cite[Remark 4.3.2]{survey}.
	Please refer to the survey paper \cite{survey} for a general discussion on Question \ref{ques:EP}. 
	
	In this paper, we will focus our attention to the family of order polytopes, which was first introduced and studied by Stanley \cite{StanleyOrderPoly}. We assume readers are familiar with the definition of a poset presented in \cite[Chapater 3]{StanleyEC1}. 
	\begin{Definition}[Definition 1.1 in \cite{StanleyOrderPoly}]
		The \emph{order polytope $\Oc_{P}$} of a finite poset $(P, \leq_{P})$
		is the subset of $\RR^{P} = \{f\colon P \rightarrow \RR \}$ defined
		by
		\begin{align*}
		&0 \leq f(i) \leq 1  &\qquad \text{for all } i\in P, \\
		&f(i)\leq f(j)      &\qquad \text{if } i\leq_{P} j.
		\end{align*}
	\end{Definition}

	\commentout{
\begin{Definition}
	The \emph{chain polytope $\Cc_P$} of a finite poset $(P, \leq_{P})$ is the subset of $\RR^{P} = \{g\colon P \rightarrow \RR \}$ defined by the conditions
	\begin{align*}
	&0 \leq g(i)   &\qquad \text{for all } i\in P, \\
	&g(i_1) + g(i_2)+ \dots g(i_k)\leq 1      &\qquad \text{for all chains } i_1 <_{P} i_2 <_{P} \dots <_{P} i_k \text{ of }P.\\
	\end{align*}
\end{Definition}
}

\commentout{
We know that the order polytope and the chain polytope of a $d$-element poset are $(0,1)$-polytopes of dimension $d$.
Order polytopes and chain polytopes have been studied in many different branches of mathematics including combinatorics (\cite{StanleyOrderPoly}), commutative algebra (\cite{Hibiring,HibiChainTriangulation}), algebraic geometry (\cite{hibivar}) and representation theory (\cite{hibirep}),
and they have good properties.
In particular, the order polytope and the chain polytope of a finite poset have a same Ehrhart polynomial (\cite[Theorem 4.1]{StanleyOrderPoly}).
However, they are not always Ehrhart positive.
}
We remark that Stanley also associates a ``chain polytope'' $\Cc_P$ to each finite poset \cite[Definition 2.1]{StanleyOrderPoly}. Since the order polytope $\Oc_P$ and the chain polytope $\Cc_P$ associated to a same poset have a same Ehrhart polynomial \cite[Theorem 4.1]{StanleyOrderPoly}, we omit the discussion on chain polytopes.

It is known that the order polytope of a $d$-element poset is a $(0,1)$-polytope of dimension $d.$ 
Order polytopes has been studied by many authors from different viewpoints of combinatorics \cite{StanleyOrderPoly}, commutative algebra \cite{Hibiring,HibiChainTriangulation}, algebraic geometry \cite{hibivar}, and representation theory \cite{hibirep},
and they have good properties (e.g., compressed polytopes, algebras with straightening rows on finite distributive lattices and Koszul algebras). 
However, order polytopes are not always Ehrhart positive.
Responding to an Ehrhart positivity question posed on mathoverflow, Stanley provided the following example \cite{StanleyAnswer}:
\begin{Example}
	For any nonnegative integer $k,$ let $Q_k$ be the poset with one minimal element covered by $k$ other elements.
	One can verify (see \cite[Exercise 3.164]{StanleyEC1}) that the Ehrhart polynomial of the order polytope $\Oc_{Q_k}$ is 
	\begin{equation}
	i(\Oc_{Q_k},t)=\sum_{i=1}^{t+1}i^k.
	\label{equ:ehrhart_Qk}
	\end{equation}
	Moreover, by direct computation, we see that the linear coefficient of $i(\Oc_{Q_{20}},t)$ equals $-168011/330,$ which is negative.
\end{Example}

Inspired by the above example and prior work on Question \ref{ques:EP}, we study the Ehrhart polynomial of Stanley's order polytope $\Oc_{Q_k}$ for any $k$, in search of counterexamples to Question \ref{ques:EP} for order polytopes of sufficiently large dimensions.
In our first main result below, we give an explicit formula for $i(\Oc_{Q_k},t)$ in terms of Bernoulli numbers. 

\begin{Theorem}
	\label{thm:formula}
	We have
	\[
		i(\Oc_{Q_k},t)=1+\sum_{j=1}^{k}\dfrac{(B_{k-j+1}+(k-j+1))}{k-j+1}\binom{k}{j}t^j+\dfrac{1}{k+1}t^{k+1},
	\] 
	where $B_n$ is the $n$th Bernoulli number.
\end{Theorem}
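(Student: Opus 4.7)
The plan is to evaluate $\sum_{i=1}^{t+1} i^k$, which equals $i(\Oc_{Q_k},t)$ by \eqref{equ:ehrhart_Qk}, as a polynomial in $t$ by invoking Bernoulli polynomials, and then match coefficients against the claimed expression. Summing the classical identity $B_{n}(x+1)-B_{n}(x)=n\,x^{n-1}$ from $x=1$ to $x=t+1$ telescopes to
\[
i(\Oc_{Q_k},t) = \frac{B_{k+1}(t+2) - B_{k+1}(1)}{k+1},
\]
where $B_n(x) = \sum_{j=0}^n \binom{n}{j}B_j\, x^{n-j}$ denotes the $n$th Bernoulli polynomial.

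Next, I would expand $B_{k+1}(t+2)$ as a polynomial in $t$ via Taylor's formula, using the standard differentiation rule $B_n^{(r)}(x)=\frac{n!}{(n-r)!}B_{n-r}(x)$, which produces
\[
B_{k+1}(t+2) = \sum_{m=0}^{k+1}\binom{k+1}{m}B_{k+1-m}(2)\,t^m.
\]
The values $B_n(2)$ follow again from $B_n(x+1)-B_n(x)=nx^{n-1}$ evaluated at $x=1$: one obtains $B_n(2)=B_n(1)+n$ for $n\ge 1$ and $B_0(2)=1$. Under the convention $B_n=B_n(1)$ for the Bernoulli numbers (equivalently $B_1=1/2$, the convention forced by matching the $k=1$ case against the target, since $S_1(t+1)=1+\tfrac{3}{2}t+\tfrac{1}{2}t^2$), this simplifies to $B_n(2)=B_n+n$ for $n\ge 1$.

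Combining these ingredients and invoking the elementary simplification $\binom{k+1}{m}/(k+1)=\binom{k}{m}/(k+1-m)$ (valid for $0\le m\le k$), the coefficient of $t^m$ in the generic range $1\le m\le k$ collapses precisely to $\binom{k}{m}(B_{k-m+1}+(k-m+1))/(k-m+1)$, matching the claim. The leading coefficient ($m=k+1$) comes from $B_0(2)=1$ and equals $1/(k+1)$, while the constant term ($m=0$) uses $B_{k+1}(2)-B_{k+1}(1)=k+1$ to give $1$. The main obstacle will be essentially bookkeeping: pinning down the right Bernoulli convention so the formula reads cleanly, and carefully separating the boundary cases $m=0$ and $m=k+1$ from the generic range, since both the identity $B_n(2)=B_n+n$ and the binomial simplification break at the endpoints.
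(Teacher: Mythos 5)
Your argument is correct, and it takes a genuinely different (and more direct) route than the paper. The paper starts from the power-sum identity \ref{itm:powersum}, expands $(t+1)^{i+1}$ binomially to get a double sum for the coefficient $a_j^{(k)}$ of $t^j$, and then proves the closed form in two stages: an exponential-generating-function computation showing $a_1^{(k)}=B_k(2)=B_k+k$, followed by the recursion $a_{j+1}^{(k+1)}=\frac{k+1}{j+1}a_j^{(k)}$ to bootstrap from $j=1$ to general $j$. You instead telescope \ref{itm:Bp_rec} to write $i(\Oc_{Q_k},t)=\bigl(B_{k+1}(t+2)-B_{k+1}(1)\bigr)/(k+1)$, Taylor-expand $B_{k+1}$ at $2$, and read off every coefficient at once from $B_n(2)=B_n+n$ together with $\binom{k+1}{m}/(k+1)=\binom{k}{m}/(k+1-m)$; this collapses the paper's base case and induction into a single closed-form computation, at the cost of invoking the Taylor/addition formula for Bernoulli polynomials (which the paper does not list among its properties, though it is standard). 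Note that both proofs ultimately hinge on the same fact $B_n(2)=B_n(1)+n$. One small inconsistency to fix when writing this up: with the paper's convention $B_n:=B_n(1)$ (so $B_1=1/2$), the expansion you quote, $B_n(x)=\sum_{j}\binom{n}{j}B_j x^{n-j}$, actually gives $B_n(x+1)$ rather than $B_n(x)$; this parenthetical is never used in your main computation, so the argument is unaffected, but it should be corrected or deleted.
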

As a consequence of the above formula, we are able to determine the sign of each Ehrhart coefficient  of $\Oc_{Q_k}$ completely.
\begin{Theorem}
	\label{thm:sign}
	For any positive integer $j$ satisfying $1 \leq j \leq k-1$,
	the coefficient of $t^j$ of $i(\Oc_{Q_k},t)$ is negative if and only if $k-j+1 \geq 20$ and $4$ divides $k-j+1$.

	Hence, the order polytope $\Oc_{Q_k}$ is Ehrhart positive if and only if $k < 20$.
\end{Theorem}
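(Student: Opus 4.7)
The plan is to read off the sign of each middle Ehrhart coefficient directly from the formula in Theorem~\ref{thm:formula}. Setting $n = k-j+1$, the coefficient of $t^j$ for $1 \le j \le k-1$ equals
\[
\frac{B_n + n}{n}\binom{k}{j},
\]
where $n \ge 2$. Since $n > 0$ and $\binom{k}{j} > 0$, its sign coincides with the sign of $B_n + n$, and the problem reduces to deciding for which integers $n \ge 2$ one has $B_n < -n$.

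I would first dispatch the easy cases. If $n \ge 3$ is odd, then $B_n = 0$, so $B_n + n > 0$. If $n$ is even, then $B_n$ has sign $(-1)^{n/2+1}$, so when $n \equiv 2 \pmod{4}$ we have $B_n > 0$ and hence $B_n + n > 0$. The only remaining case is $n \equiv 0 \pmod{4}$, in which $B_n < 0$ and the sign of $B_n + n$ depends on whether $|B_n|$ exceeds $n$. For the four values $n \in \{4, 8, 12, 16\}$, direct substitution of the classical values $B_4 = -\tfrac{1}{30}$, $B_8 = -\tfrac{1}{30}$, $B_{12} = -\tfrac{691}{2730}$, $B_{16} = -\tfrac{3617}{510}$ shows $|B_n| < n$ in every instance, so these coefficients are positive.

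The crux is the case $n \equiv 0 \pmod{4}$ with $n \ge 20$, where I need $|B_n| > n$ uniformly. Here I would invoke the classical identity
\[
|B_{2m}| = \frac{2\,(2m)!}{(2\pi)^{2m}}\,\zeta(2m)
\]
together with the trivial bound $\zeta(2m) \ge 1$. This reduces the desired inequality $|B_{2m}| > 2m$ to $(2m-1)! > \tfrac{1}{2}(2\pi)^{2m}$. A single numerical check at $m = 10$ gives $19! > (2\pi)^{20}/2$; moreover the ratio of consecutive terms $(2m+1)!/\bigl((2m-1)!\,(2\pi)^{2}\bigr) = 2m(2m+1)/(4\pi^{2})$ exceeds $1$ for all $m \ge 3$, so the left-hand side grows strictly faster than the right, and the inequality propagates to every $m \ge 10$. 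The main obstacle is concentrated in this step, but the zeta identity reduces it to one numerical verification plus an elementary monotonicity check.

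Finally, the Ehrhart positivity conclusion follows by a case split on $k$. If $k < 20$, then for every $j \in \{1, \dots, k-1\}$ the index $n = k-j+1$ satisfies $n \le k \le 19$, so none of the middle coefficients is negative by the analysis above; combined with the always-positive leading, second-highest, and constant coefficients singled out in the introduction, this shows $\Oc_{Q_k}$ is Ehrhart positive. Conversely, if $k \ge 20$, then $j = k-19$ lies in $\{1, \dots, k-1\}$ and yields $n = 20$, whose coefficient $\tfrac{B_{20}+20}{20}\binom{k}{k-19}$ is negative by the computation above, so $\Oc_{Q_k}$ fails to be Ehrhart positive.
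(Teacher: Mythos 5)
Your proposal is correct, and the reduction is the same as the paper's: both read off from Theorem \ref{thm:formula} that the sign of the coefficient of $t^j$ is the sign of $B_n+n$ with $n=k-j+1$, and both dispatch the cases $n$ odd, $n\equiv 2\pmod 4$, and $4\mid n$ with $n<20$ by the vanishing/sign properties and the table of small Bernoulli numbers. Where you genuinely diverge is the crux inequality $|B_{4m}|>4m$ for $m\ge 5$. The paper proves this by induction on $m$, using the quadratic recurrence $B_{2n}=-\frac{1}{2n+1}\sum_{j=1}^{n-1}\binom{2n}{2j}B_{2j}B_{2(n-j)}$: since every summand has the same sign, the sum is bounded below by its single $j=2$ term $\binom{4m}{4}B_4B_{4m-4}$, and an elementary estimate closes the induction from the base case $B_{20}=-174611/330$. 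You instead invoke Euler's formula $|B_{2m}|=\frac{2(2m)!}{(2\pi)^{2m}}\zeta(2m)$, bound $\zeta(2m)\ge 1$, and reduce to $(2m-1)!>\tfrac12(2\pi)^{2m}$, verified at $m=10$ and propagated by the ratio test $2m(2m+1)>4\pi^2$ for $m\ge 3$; all of these steps check out numerically. Your route is arguably more transparent (factorial beats exponential) and gives the true asymptotic growth of $|B_{2m}|$ rather than just the bound needed, but it imports the zeta identity as an external classical fact, whereas the paper's induction is self-contained given the recurrence it lists among its stated properties of Bernoulli numbers. The final deduction of the Ehrhart positivity dichotomy at $k=20$ is handled identically in both.
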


We have the following immediate corollary to Theorem \ref{thm:sign}, providing an affirmative answer to a question asked by Hibi (private communication).
\begin{Corollary}
	\label{cor:hibi}
		For any positive integer $\ell$,
	there exists a finite poset $P$ such that $i(\Oc_{P},t)$ has precisely $\ell$ negative coefficients.	
\end{Corollary}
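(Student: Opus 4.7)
The plan is to apply Theorem \ref{thm:sign} to the family $\{Q_k\}$ directly: for each $\ell$, we produce a single $k$ such that $\Oc_{Q_k}$ has exactly $\ell$ negative Ehrhart coefficients.

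First, I would translate Theorem \ref{thm:sign} into a counting problem. By that theorem, the number of negative coefficients of $i(\Oc_{Q_k}, t)$ equals the number of integers $j$ with $1 \leq j \leq k-1$ satisfying both $k-j+1 \geq 20$ and $4 \mid (k-j+1)$. Substituting $m = k-j+1$, this equals the number of $m$ in the range $2 \leq m \leq k$ with $m \geq 20$ and $4 \mid m$, i.e., the number of multiples of $4$ in the interval $[20, k]$. For $k \geq 20$, this count is $\lfloor k/4 \rfloor - 4$, since the multiples of $4$ in $[1,19]$ are exactly $4, 8, 12, 16$.

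Next, given a positive integer $\ell$, I would choose $k = 4\ell + 16$, so that $\lfloor k/4 \rfloor = \ell + 4$ and hence the number of negative Ehrhart coefficients of $\Oc_{Q_k}$ equals $(\ell + 4) - 4 = \ell$. Taking $P = Q_{4\ell+16}$ then furnishes the required poset.

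There is no real obstacle here: once Theorem \ref{thm:sign} is in hand, the corollary is essentially a bookkeeping exercise about multiples of $4$. The only care needed is to verify that the substitution $m = k-j+1$ correctly parametrizes the admissible range and that the extremal cases $j = 1$ (i.e., $m = k$) and $j = k-1$ (i.e., $m = 2$) are handled correctly, both of which are immediate for the choice $k = 4\ell + 16 \geq 20$.
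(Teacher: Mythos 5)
Your proposal is correct and matches the paper's (implicit) argument: the paper presents this as an immediate consequence of Theorem \ref{thm:sign}, and your count of the multiples of $4$ in $[20,k]$, together with the choice $k = 4\ell+16$, is exactly the bookkeeping that makes it precise. The only coefficients not covered by Theorem \ref{thm:sign} are those of $t^0$, $t^k$, and $t^{k+1}$, which are all visibly positive, so your count is indeed the total number of negative coefficients.
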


Another clear consequence of Theorem \ref{thm:sign} is that it settles our initial goal of finding order polytopes with a negative Ehrhart coefficient for every sufficiently large dimension. More precisely, since $\Oc_{Q_k}$ is of dimension $k+1,$ there exists a non-Ehrhart-positive order polytope for each dimension $d \ge 21.$ 
This naturally motivates us to consider the following question:

\begin{Question}
	\label{ques:lower}
Can we construct order polytopes of lower dimensions, i.e., $d \le 20$, that have a negative Ehrhart coefficient?
\end{Question}

We will discuss this question in the last part of this paper, and prove the following result:
\begin{Theorem}
	\label{thm:overall}
	For any positive integer $d \geq 14$, there exists a non-Ehrhart-positive order polytope of dimension $d$.

	Any order polytope of dimension $d \le 11$ is Ehrhart positive. 
\end{Theorem}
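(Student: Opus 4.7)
The plan splits Theorem \ref{thm:overall} into its two parts. For the first part, the case $d \geq 21$ is immediate from Theorem \ref{thm:sign}: since $\dim \Oc_{Q_{d-1}} = d$ and $d-1 \geq 20$, the order polytope $\Oc_{Q_{d-1}}$ provides the required non-Ehrhart-positive example. So the real work lies in the finite range $14 \leq d \leq 20$, which must be handled by explicit constructions. My approach would be to search within a natural generalization of Stanley's family $\{Q_k\}$---for instance, posets having a single minimum with its upper covers replaced by chains of prescribed lengths, possibly combined via small disjoint unions, together with their duals. For such posets the Ehrhart polynomial can be written as a short linear combination of power-sums $\sum_{i=1}^{t+1} i^m$, either by stratifying over the value at the minimum or by applying the product rule for disjoint unions. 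Then, via Faulhaber's formula---equivalently, via the Bernoulli-number expansion in Theorem \ref{thm:formula}---one obtains a closed-form expression for every Ehrhart coefficient, and a modest computer-aided search through this pool should pinpoint, for each $d \in \{14, \dots, 20\}$, an explicit poset whose order polytope carries a negative Ehrhart coefficient.

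For the second part, the plan is a finite enumerative verification. The number of isomorphism classes of posets on at most $11$ elements is finite and has been tabulated explicitly in the literature. For every such poset $P$, one computes the Ehrhart polynomial $i(\Oc_P,t)$---for instance, via the identity $i(\Oc_P,t) = \Omega_P(t+1)$ where $\Omega_P$ is the order polynomial of $P$, or directly using a polytope software package such as \textsc{Normaliz}---and verifies that every coefficient is positive. Since there are only finitely many posets in the range, this suffices.

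The main conceptual obstacle lies in the lower end of Part 1: it is not a priori clear that non-Ehrhart-positivity can be realized below dimension $21$, so the difficulty is in pinpointing a family flexible enough to produce a negative Ehrhart coefficient even when the total dimension is as small as $14$; once a candidate family is fixed, checking negativity of a specific coefficient is mechanical from the Bernoulli-number formula. For Part 2, the obstacle is practical rather than conceptual: one must organize the enumeration of posets on up to $11$ elements efficiently, but this is well within the reach of standard enumeration and polytope-computation tools.
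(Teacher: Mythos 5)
Your treatment of the range $d\ge 21$ (via $\Oc_{Q_{d-1}}$ and Theorem \ref{thm:sign}) and of the second assertion (exhaustive verification over the finitely many posets on at most $11$ elements) coincides with the paper's; the latter is precisely the paper's computation with Stembridge's posets package and SageMath over all $46{,}749{,}427$ posets on $11$ elements.

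The gap is in the range $14\le d\le 20$, which is the real content of the first assertion. You offer only a search heuristic over a pool of posets and assert that it ``should'' succeed; no example is produced, and nothing guarantees your pool contains one. Worse, there is concrete reason to doubt that it does. Taking duals changes the order polytope only by the affine involution $f\mapsto 1-f$, hence does not change the Ehrhart polynomial; taking disjoint unions multiplies Ehrhart polynomials (the order polytope of a disjoint union is the product of the order polytopes), so it can never manufacture a negative coefficient out of Ehrhart-positive pieces. Your pool therefore effectively reduces to the ``one minimum covered by chains'' posets, whose prototypes $Q_{13},\dots,Q_{19}$ (all chains of length one) are Ehrhart positive by Theorem \ref{thm:sign}, and lengthening the chains only pushes the $h^*$-vector further toward low degrees, which works against negativity. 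The paper's construction goes in the opposite direction: it fattens the bottom of the poset, taking $P_{m,n}$ to be the ordinal sum of an $m$-element antichain and an $n$-element antichain, so that $h^*(\Oc_{P_{m,n}},z)=A_m(z)A_n(z)$ by Lemma \ref{lem:ordinal}, and then verifies via \eqref{equ:htoe} that $\Oc_{P_{7,7}},\Oc_{P_{7,8}},\dots,\Oc_{P_{10,10}}$ have negative Ehrhart coefficients in dimensions $14,\dots,20$ (while $P_{6,6}$ and $P_{6,7}$, of dimensions $12$ and $13$, do not --- a warning that existence in this range is genuinely delicate). Until you exhibit explicit posets with verified negative coefficients for each $d\in\{14,\dots,20\}$, the first assertion is not proved.
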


This article is organized as follows.
In Section \ref{sec:mainproof}, we recall definitions of Bernoulli polynomials and Bernoulli numbers, review their properties that are necessary for this paper, and then complete proofs for Theorems \ref{thm:formula} and \ref{thm:sign}.
In Section \ref{sec:lower}, we discuss Question \ref{ques:lower} on the existence of lower-dimensional non-Ehrhart-positive order polytopes, and provide construction of order polytopes with negative Ehrhart coefficients of dimension $d \in \{14, 15, \dots, 20\}.$

We finish this introduction with two questions for possible future research. 
First, given the results stated in Theorem \ref{thm:overall}, the only dimensions remain open in terms of Question \ref{ques:lower} are $12$ and $13$:
\begin{Question}
	\label{ques2}
	For $d=12$ or $13$, does there exist an order polytope of dimension $d$ such that its Ehrhart polynomial has a negative coefficient?
\end{Question}

Notice that it follows from the statement of Theorem \ref{thm:sign} that for each integer $k \ge 20,$ the coefficient of $t^j$ of $i(\Oc_{Q_k},t)$ is positive for all $j: k-18 \le j \le k+1$, and $t^{k-19}$ is the highest degree term that has a negative coefficient. Therefore, even though we were able to construct non-Ehrhart-positive order polytopes for each sufficiently large dimension, our construction does not give negative Ehrhart coefficients at higher degree terms. This is also true for the non-Ehrhart-positive order polytopes of dimension $d \in \{14,15,\dots, 20\}$ provided in Section \ref{sec:lower}; which all have negative coefficients appear in lower degree terms. Hence, it is natural to ask the following question:
\begin{Question}
Is it possible to construct order polytopes such that negative coefficients appear at higher degree terms (e.g., the third highest degree term,) in their Ehrhart polynomials, for each sufficiently large dimension $d$?
\end{Question}

\section{Proof of Theorems \ref{thm:formula} and \ref{thm:sign}}
\label{sec:mainproof}
	
	
		
		Recall the {\rm Bernoulli polynomials} $B_k(x)$ are defined by 
		\[
	\dfrac{te^{x t}}{e^t-1}=\sum_{k \geq 0}\dfrac{B_k(x)}{k!}t^k,
	\] 
		and the {\rm Bernoulli numbers} $B_k$ is then defined by $B_k:=B_k(1)$.
%
%
		We list the Bernoulli numbers $B_n$ for $n=0,1,2,\dots, 20$ in the table below.
		\begin{center}
		\bgroup
	\setlength\tabcolsep{3pt}
	\begin{tabular}{c|*{21}{c}}
		$n$ & 0 & 1 & 2 & 3 & 4 & 5 & 6 & 7 & 8 & 9 & 10 & 11 & 12 & 13 & 14 & 15 & 16 & 17 & 18 & 19 & 20 \\[1mm]
		\hline
		$B_n$ & $1$ & $\scriptstyle \frac{1}{2}$ & $\scriptstyle \frac{1}{6}$ & $0$ & $\scriptstyle -\frac{1}{30}$ & $0$ & $\scriptstyle \frac{1}{42}$ & $0$ & $\scriptstyle -\frac{1}{30}$ & $0$ & $\scriptstyle \frac{5}{66}$
		& $0$ & $\scriptstyle -\frac{691}{2730}$ & $0$ & $\scriptstyle \frac{7}{6}$ & $0$ & $\scriptstyle -\frac{3617}{510}$ & $0$ & $\scriptstyle \frac{43867}{798}$ & $0$ & $\scriptstyle -\frac{174611}{330}$
		\end{tabular}
		\egroup
	\end{center}
\commentout{
	\[
		B_0=1, B_1=\dfrac{1}{2}, B_2=\dfrac{1}{6}, B_4=-\dfrac{1}{30},
		B_6=\dfrac{1}{42},
		B_8=-\dfrac{1}{30},
		B_{10}=\dfrac{5}{66},
		B_{12}=-\dfrac{691}{2730},\]
		\[
		B_{14}=\dfrac{7}{6},
		B_{16}=-\dfrac{3617}{510},
		B_{18}=\dfrac{43867}{798},
		B_{20}=-\dfrac{174611}{330}.
		\]
	}

	Bernoulli numbers and Bernoulli polynomials are well-studied mathematical objects. Below we list a few of their properties (see the book \cite{Bernoulli}) that will be used in the proofs of Theorems \ref{thm:formula} and \ref{thm:sign}.
	\begin{enumerate}[label=(B\arabic*)]
		\item \label{itm:oddzero} 
	$B_k=0$ if $k$ is odd and $k \geq 3$.
\item \label{itm:evensign}  
	For any positive integer $k$ we have $B_{4k-2}>0$ and $B_{4k}<0$.
\item \label{itm:Bp_rec}
	$B_k(x+1)=B_k(x)+kx^{k-1}$.

	\item \label{itm:powersum} 	  
		For a positive integers $t$ and $n$, we have
	\[
	\sum_{i=1}^{t}i^n=\dfrac{1}{n+1}\sum_{i=0}^{n}\binom{n+1}{i+1} B_{n-i} t^{i+1} .
	\] 
	In particular, when $t=1,$ we get $\displaystyle \dfrac{1}{n+1}\sum_{i=0}^{n}\binom{n+1}{i+1} B_{n-i}  = 1.$
\item  \label{itm:evenrec}	
	For any positive integer $n\geq 2$, we have 
	\[
	B_{2n}=-\dfrac{1}{2n+1}\sum_{j=1}^{n-1}\binom{2n}{2j}B_{2j}B_{2(n-j)}.
	\]
		\end{enumerate}
	

We are now ready to prove Theorem \ref{thm:formula}.
\begin{proof}[Proof of Theorem \ref{thm:formula}]
	It follows from \eqref{equ:ehrhart_Qk} and Property \ref{itm:powersum} that 
	\begin{align*}
	i(\Oc_{Q_k},t)= \sum_{i=1}^{t+1}i^k &=\dfrac{1}{k+1}\sum_{i=0}^{k}(t+1)^{i+1} \binom{k+1}{i+1}B_{k-i}
	=\dfrac{1}{k+1}\sum_{i=0}^{k}\sum_{j=0}^{i+1} t^j \binom{i+1}{j} \binom{k+1}{i+1}B_{k-i} \\
	&=1+\dfrac{1}{k+1}\sum_{j=1}^{k+1}t^j\left(\sum_{i=j-1}^{k}\binom{i+1}{j}\binom{k+1}{i+1}B_{k-i}  \right),
\end{align*}
For any integers $k$ and $j$ satisfying $1 \leq j \leq k+1$, let $a_j^{(k)}$ be the coefficient of $t^j$ of $i(\Oc_{Q_k},t)$.
Then by the above formula, we have that 
\[
a_j^{(k)}=\dfrac{1}{k+1}\sum_{i=j-1}^{k}\binom{i+1}{j}\binom{k+1}{i+1}B_{k-i}.
\]
Using the facts that $B_0=1$, one sees that $a_{k+1}^{(k)}=\dfrac{1}{k+1}$.
Hence, it is left to show that for any integers $k$ and $j$ satisfying $1 \le j \le k,$
\begin{equation}
	a_j^{(k)} =\dfrac{(B_{k-j+1}+(k-j+1))}{k-j+1}\binom{k}{j}. 
	\label{equ:ajk}
\end{equation}

We first show \eqref{equ:ajk} holds when $j=1,$ equivalently, $a_1^{(k)}=B_k+k$ for any $k \geq 0$.
Considering the exponential generating function of $a_1^{(k)}:$
\[
\sum_{k=0}^{\infty}a_1^{(k)}\dfrac{t^k}{k!}=\sum_{k=0}^{\infty}
\left(
\sum_{i=0}^{k}\binom{k}{i}\cdot 1 \cdot B_{k-i}
\right)
\dfrac{t^k}{k!}\\
=\left(
\sum_{k=0}^{\infty}1 \cdot \dfrac{t^k}{k!}
\right)
\cdot
\left(
\sum_{k=0}^{\infty}B_i\dfrac{t^k}{k!}
\right)\\
=\dfrac{te^{2t}}{e^{t}-1}\\
=\sum_{k=0}^{\infty}B_k(2)\dfrac{t^k}{k!}.
\]
Therefore, $a_1^{(k)} = B_k(2) = B_k(1) +k = B_k+k,$ where the second equality follows from Property \ref{itm:Bp_rec} with $x=1$. 

Now one sees that it suffices to show that $a_{j+1}^{(k+1)}=\dfrac{k+1}{j+1}a_j^{(k)}$, for any $j \ge 1$, to finish the proof for \eqref{equ:ajk}.
Indeed,
\begin{align*}
a_{j+1}^{(k+1)}&=\dfrac{1}{k+2}\sum_{i=j}^{k+1}\binom{i+1}{j+1}\binom{k+2}{i+1}B_{k-i+1}
=\dfrac{1}{k+2}\sum_{i=j-1}^{k}\binom{i+2}{j+1}\binom{k+2}{i+2}B_{k-i}\\
&=\dfrac{k+1}{j+1}\left(
\dfrac{1}{k+1}\sum_{i=j-1}^{k}\binom{i+1}{j}\binom{k+1}{i+1}B_{k-i}
\right)
=\dfrac{k+1}{j+1}a_j^{(k)}.
\end{align*}
\commentout{
	Now, we compute $a_j^{(k)}$ for $2 \leq j \leq k-1$.
In fact, one has
\[
a_j^{(k)}=\dfrac{k}{j}a_{j-1}^{(k-1)}=\cdots=\dfrac{k}{j}\cdots\dfrac{k-j+2}{2}a_1^{(k-j+1)}=\dfrac{(B_{k-j+1}+(k-j+1))}{k-j+1}\binom{k}{j}.
\]
}
This completes the proof.
\end{proof}

Theorem \ref{thm:formula} states that the coefficient of $t^j$ of $i(\Oc_{Q_k}, t)$ is a positive multiple of $B_{k-j+1} + (k-j+1)$. Thus, one sees that Theorem \ref{thm:sign} is reduced to the following lemma.
\begin{Lemma}
 $B_k+k$ is negative if and only if $k \geq 20$ and  $4$ divides $k$ . 
\end{Lemma}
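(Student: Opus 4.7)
The plan is to split into cases by the residue of $k$ modulo $4$. When $k$ is odd, property \ref{itm:oddzero} gives $B_k = 0$ for $k \ge 3$, while $B_1 + 1 = 3/2$, so $B_k + k > 0$. When $k \equiv 2 \pmod{4}$, property \ref{itm:evensign} yields $B_k > 0$, so again $B_k + k > 0$. Hence a negative value can only occur when $k = 4m$, in which case $B_k < 0$ by \ref{itm:evensign}, and the lemma reduces to showing that $|B_{4m}| > 4m$ if and only if $m \ge 5$.

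For $m \in \{1,2,3,4\}$, the tabulated Bernoulli numbers give $|B_{4m}| \le |B_{16}| = 3617/510 < 8$, so $|B_{4m}| < 4m$ and $B_k + k > 0$. For $m = 5$ the table gives $|B_{20}| = 174611/330 > 20$, so $B_{20} + 20 < 0$.

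For $m \ge 6$, the plan is to prove by induction on $n$ that $|B_{2n}| \ge 2n$ for every integer $n \ge 10$; applying this at $n = 2m$ handles all remaining cases. The base case $n=10$ is the computation just done. For the inductive step, the crucial observation is a uniform sign analysis of the recurrence \ref{itm:evenrec}: since \ref{itm:evensign} says $B_{2j} = (-1)^{j+1}|B_{2j}|$ for $j \ge 1$, each summand $B_{2j}B_{2(n+1-j)}$ in the expression for $B_{2n+2}$ carries the common sign $(-1)^{n+1}$, independent of $j$. Absolute values therefore slot cleanly into the sum, giving
\[
|B_{2n+2}| \;=\; \frac{1}{2n+3}\sum_{j=1}^{n}\binom{2n+2}{2j}|B_{2j}|\,|B_{2(n+1-j)}|.
\]
Retaining only the $j=1$ and $j=n$ summands produces the lower bound
\[
|B_{2n+2}| \;\ge\; \frac{(2n+2)(2n+1)}{6(2n+3)}\,|B_{2n}|,
\]
which, combined with the inductive hypothesis $|B_{2n}| \ge 2n$, reduces the step to the elementary inequality $4n^2 - 10n - 18 \ge 0$, valid for all $n \ge 10$.

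The main obstacle will be aligning the signs across all terms of the recurrence so that the two easiest summands alone furnish a valid lower bound, and verifying that the resulting multiplier $(2n+2)(2n+1)/(6(2n+3))$ eventually exceeds the needed ratio $(2n+2)/(2n)$. Both hold for $n \ge 10$, but fail for smaller $n$, which is exactly why $k \in \{4,8,12,16\}$ must be checked by hand against the Bernoulli table rather than being folded into the induction.
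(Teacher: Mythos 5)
Your proof follows essentially the same route as the paper's: both reduce the lemma to showing $|B_{4m}|>4m$ for $m\ge 5$ by combining the sign property \ref{itm:evensign} with the recurrence \ref{itm:evenrec} to discard all but the most convenient term(s) and then inducting from the base case $B_{20}=-174611/330$, the only differences being that you step the index by $2$ rather than $4$ and retain the two $\binom{2n+2}{2}B_2B_{2n}$ terms where the paper keeps the single $\binom{4m}{4}B_4B_{4m-4}$ term. One small repair: the lemma requires the strict inequality $|B_{4m}|>4m$, so your induction should carry $|B_{2n}|>2n$ rather than $|B_{2n}|\ge 2n$ --- this costs nothing, since the base case is strict and strictness propagates through your multiplicative bound $|B_{2n+2}|\ge \frac{(2n+2)(2n+1)}{6(2n+3)}|B_{2n}|$.
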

\begin{proof}
	By Properties \ref{itm:oddzero} and \ref{itm:evensign} and the values of $B_n$ listed at the beginning of the section, 
	one observes that $B_k + k$ is positive if $4$ does not divide $k$ or if $k < 20$.

	Now we assume that $k =4m$ with $m \geq 5$.
	We will show that $B_{4m} < -4m$ by induction on $m$. The base case when $m=5$ can be verified directly given we know that $B_{20} = -174611/330.$ Assume $m \ge 6$ and $B_{4m-4} <  -(4m-4)$.
	It follows from Property \ref{itm:evensign} that $B_{2j}B_{4m-2j} > 0$
	for positive integer $1 \leq j \leq 2m-1$.
	This together with Property \ref{itm:evenrec} gives:
	\begin{align*}
	B_{4m}&=-\dfrac{1}{4m+1}\sum_{j=1}^{2m-1}
	\left(
	\binom{4m}{2j}B_{2j}B_{4m-2j}
	\right)
	< -\dfrac{1}{4m+1}\binom{4m}{4}B_{4}B_{4m-4}
	\end{align*}
	It is easy to verify that for any $m \ge 5,$ we have
	\[ (4m-1)(2m-1) > 4m+1 \quad \text{and} \quad (4m-3)(m-1) > 90.\]
	Applying the fact that $B_4=-1/30$, the induction hypothesis, and the above two inequialities, we obtain
	\begin{align*}
	B_{4m}
	&<\dfrac{1}{30(4m+1)}\binom{4m}{4}B_{4m-4} <\dfrac{1}{30(4m+1)}\binom{4m}{4}(-4m+4)\\
	&= -\dfrac{2m (4m-1)(2m-1)(4m-3)(m-1)}{45(4m+1)} < -\dfrac{2m\cdot (4m+1) \cdot 90}{45(4m+1)} = -4m. \qedhere
	\end{align*}
	\commentout{We should show that 
	$g(x):=-64x^4+160x^3-140x^2+770x+174<0$ for $x \geq 5$.
	It is easy to see that for $i=1,\ldots,3$ and $x \geq 5$,
	one has $g^{(i)}(x) < 0$.
Hence we obtain $g(x) \leq g(5) <0$ for any $x \geq 5$.	}
\end{proof}

\section{Low-dimensional non-Ehrhart-positive order polytopes} \label{sec:lower}

In the same mathoverflow post \cite{StanleyAnswer} mentioned in the introduction,
Stanley stated that using Stembridge's posets package for Maple \cite{Stembridge}, one can check that the order polytope of any poset with at most eight elements is Ehrhart positive.
We extended his approach and verified that the order polytope of any poset with at most eleven elements is Ehrhart-positive by using Stembridge's posets package together with SageMath \cite{sage}.
We remark that the number of posets with eleven elements is $46749427$ and it took about three weeks to run the compute to check them.
It will be difficult to go beyond posets with more than $11$ elements because there are already $1104891746$ (more than one billion) posets with $12$ elements \cite{posetnumber}.

On the other hand, from Theorem \ref{thm:sign} 
we know that for any integer $d \geq 21$, there exists a non-Ehrhart-positive order polytope of dimension $d$. Therefore, in order to prove Theorem \ref{thm:overall}, it remains to show examples of non-Ehrhart-positive order polytopes of dimension $d = 14, 15, \dots, 20.$ The rest of this paper is devoted to constructing these examples, for which we need to recall two more concepts: ``$h^*$-polynomials'' of polytopes and ``ordinal sums'' of posets.

\commentout{
Before giving such examples, we recall the $h^*$-polynomial of integral convex polytopes.
Let $\Pc \subset \RR^d$ be an integral convex polytope of dimension $d$.
The generating function of the integer point enumerator, i.e., the formal power series
\[\text{Ehr}_\Pc(z)=1+\sum\limits_{t=1}^{\infty}i(\Pc,t)z^t\]
is called the \textit{Ehrhart series} of $\Pc$.
It is well known that it can be expressed as a rational function of the form
\[\text{Ehr}_\Pc(z)=\frac{h^*_0+h^*_1z+\cdots + h^*_dz^d}{(1-z)^{d+1}}.\]
 The polynomial in the numerator \[h^*(\Pc,z):=h^*_0+h^*_1z+\cdots+h^*_d z^d\]
is called
the \textit{$h^*$-polynomial} of $\Pc$. %
}
It is well-known that the generating function of the Ehrhart polynomial $i(\Pc,t)$ of an integral convex polytope $\Pc$ of dimension $d$ can be expressed as a rationl function of the form:
\[ 1+\sum\limits_{t=1}^{\infty}i(\Pc,t)z^t =\frac{h^*_0+h^*_1z+\cdots + h^*_dz^d}{(1-z)^{d+1}}.\]
The polynomial in the numerator  \[h^*(\Pc,z):=h^*_0+h^*_1z+\cdots+h^*_d z^d\] is called the \emph{$h^*$-polynomial} of $\Pc$.
We can extract the Ehrhart polynomial of $\Pc$ from its $h^*$-polynomial by the following formula:
\begin{equation}\label{equ:htoe}
i(\Pc,t)=\sum_{i=0}^{d}h^*_{i}\binom{t+d-i}{d}.
\end{equation}
It is a well-known result due to Stanley that $h^*(\Pc,z)$ is a polynomial in $z$ with non-negative integer coefficients of degree at most $d$. (See \cite{CCD} for more discussion on $h^*$-polynomials.)

\commentout{
Next, we recall an operation on posets.
 For finite posets $(P, \leq_P)$ and $(Q, \leq_Q)$ with $P \cap Q =\emptyset$,
the \textit{ordinal sum} of $P$ and $Q$ is the poset $(P \oplus Q,\leq_{P \oplus Q})$ 
on $P \oplus Q=P \cup Q$ such that $s \leq_{P \oplus Q} t$ }
The \emph{ordinal sum} of two disjoint finite posets $(P, \leq_P)$ and $(Q, \leq_Q)$ is the poset $(P \oplus Q,\leq_{P \oplus Q})$ on $P \oplus Q=P \cup Q$ such that $s \leq_{P \oplus Q} t$
if (a) $s,t \in P$ and $s \leq_{P} t$,
or (b) $s,t \in Q$ and $s \leq_{Q} t$,
or (c) $s \in P$ and $t \in Q$.
See Figure \ref{fig:ordinalsum} for an example.
	\begin{figure}[h]
		\center
\begin{picture}(300,60)(20,5)
\put(20,8){$P$}
\put(50,30){\circle*{5}}
\put(50,0){\circle*{5}}
\put(50,0){\line(0,1){30}}

\put(125,15){$Q$}
\put(150,0){\circle*{5}}
\put(110,0){\circle*{5}}

\put(215,55){$P \oplus Q$}
\put(250,60){\circle*{5}}
\put(210,60){\circle*{5}}
\put(230,30){\circle*{5}}
\put(230,0){\circle*{5}}
\put(230,0){\line(0,1){30}}
\put(230,30){\line(2,3){20}}
\put(230,30){\line(-2,3){20}}
\end{picture}
\caption{The ordinal sum of a $2$-elements chain and a $2$-elements antichain.}\label{fig:ordinalsum}
	\end{figure}
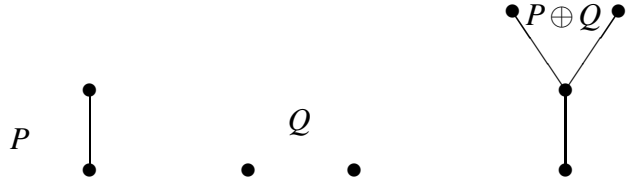

\begin{Lemma}[{\cite[Proposition 7.4]{HKT}}]
	\label{lem:ordinal}
	Suppose $(P, \leq_P)$ and $(Q, \leq_Q)$ be two disjoint finite posets. 
	Then 
	\[
	h^*(\Oc_{P \oplus Q}, z)=h^*(\Oc_P,z) \cdot h^*(\Oc_Q,z).
	\]
\end{Lemma}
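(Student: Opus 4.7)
The plan is to prove the multiplicativity at the level of Ehrhart series first, and then read off the $h^*$-polynomial identity from the definition. A lattice point in $t\Oc_{P\oplus Q}$ is an order-preserving map $f\colon P\cup Q \to \{0,1,\dots,t\}$, and the covering relations of $P\oplus Q$ force $a := \max_{p\in P} f(p) \le \min_{q\in Q} f(q) =: b$. I would stratify by the pair $(a,b)$: for each fixed $0\le a\le b\le t$, the restriction of $f$ to $P$ is an order-preserving map $P \to \{0,\dots,a\}$ with maximum exactly $a$, while the restriction to $Q$ is (after subtracting $b$) an order-preserving map $Q\to\{0,\dots,t-b\}$ with minimum exactly $0$. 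Calling these counts $p(a)$ and $q(t-b)$, independence of the two restrictions gives
\[
i(\Oc_{P\oplus Q},t) \;=\; \sum_{0\le a\le b\le t} p(a)\,q(t-b).
\]

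By inclusion--exclusion, $p(a) = i(\Oc_P,a)-i(\Oc_P,a-1)$ with the convention $i(\Oc_P,-1):=0$, and symmetrically for $q(c)$. Hence the two generating functions telescope:
\[
P(z):=\sum_{a\ge 0}p(a)z^a = (1-z)\,\mathrm{Ehr}_{\Oc_P}(z), \qquad Q(z):=\sum_{c\ge 0}q(c)z^c = (1-z)\,\mathrm{Ehr}_{\Oc_Q}(z),
\]
where $\mathrm{Ehr}_{\Pc}(z) := 1+\sum_{t\ge 1} i(\Pc,t)z^t$ denotes the Ehrhart series. Substituting $c=b-a$ and $d=t-b$ decouples the triple sum into three independent geometric pieces, so that
\[
\mathrm{Ehr}_{\Oc_{P\oplus Q}}(z) \;=\; \frac{P(z)\,Q(z)}{1-z} \;=\; (1-z)\,\mathrm{Ehr}_{\Oc_P}(z)\,\mathrm{Ehr}_{\Oc_Q}(z).
\]

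To conclude, I would multiply both sides by $(1-z)^{|P|+|Q|+1}$. Since $\dim\Oc_{P\oplus Q}=|P|+|Q|$, $\dim\Oc_P=|P|$, and $\dim\Oc_Q=|Q|$, the definition of the $h^*$-polynomial immediately yields $h^*(\Oc_{P\oplus Q},z) = h^*(\Oc_P,z)\cdot h^*(\Oc_Q,z)$, as desired. I expect the main obstacle to lie in the stratification step: one must verify $p(0)=q(0)=1$ so that the telescoping identity holds with no boundary correction, and treat the degenerate cases $P=\emptyset$ or $Q=\emptyset$ separately (where the claim reduces to the trivial identity $h^*(\Oc_R,z)\cdot 1 = h^*(\Oc_R,z)$). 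Everything else is a clean repackaging of a standard ``slice by the value at the seam'' argument.
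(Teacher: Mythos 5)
Your argument is correct. Note, however, that the paper does not prove this lemma at all: it is quoted as \cite[Proposition~7.4]{HKT}, so there is no internal proof to compare against, and your write-up is a self-contained substitute for the citation. The usual route to this fact (and, I believe, the one in the cited reference) passes through Stanley's description of $h^*(\Oc_P,z)$ as the descent-generating function over linear extensions of a naturally labelled $P$, where the product formula falls out because a linear extension of $P\oplus Q$ is exactly a linear extension of $P$ followed by one of $Q$ and the labels can be chosen so that no descent occurs at the seam. Your ``slice at the seam'' argument is more elementary: stratifying lattice points of $t\Oc_{P\oplus Q}$ by $a=\max_P f$ and $b=\min_Q f$, observing $p(a)=i(\Oc_P,a)-i(\Oc_P,a-1)$ and $q(c)=i(\Oc_Q,c)-i(\Oc_Q,c-1)$, and decoupling via $c=b-a$, $d=t-b$ correctly yields $\mathrm{Ehr}_{\Oc_{P\oplus Q}}(z)=(1-z)\,\mathrm{Ehr}_{\Oc_P}(z)\,\mathrm{Ehr}_{\Oc_Q}(z)$, and the exponent bookkeeping $|P|+|Q|+1=(|P|+1)+(|Q|+1)-1$ then gives the $h^*$ identity; you also rightly flag the only boundary issues ($p(0)=q(0)=1$ and the empty-poset case). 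What the descent-statistic proof buys is an immediate explanation of \emph{why} the $h^*$-coefficients multiply combinatorially; what yours buys is independence from any triangulation or $P$-partition machinery, using only the definition of the Ehrhart series.
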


Now we can construct examples of lower-dimensional non-Ehrhart-positive order polytopes, finishing the proof of Theorem \ref{thm:overall}.
\begin{Example}
	\label{ex:lower}
	For a positive integer $k$,
	let $P_k$ be a $k$-element antichain.
	Then the order polytope $\Oc_{P_k}$ is a $k$-dimensional unit cube.
	Thus, $h^*(\Oc_{P_k},z)=A_k(z)$, where $A_k(z)$ is the Eulerian polynomial of degree $k-1$ (see e.g., \cite[Example 4.6.15]{StanleyEC1}). 
	We list below the Eulerian polynomials $A_k(z)$ for $6 \leq k \leq 10$:
	\[
	A_6(z)=(1+z^5)+57(z+z^4)+302(z^2+z^3),
	\]
	\[
	A_7(z)=(1+z^6)+120(z+z^5)+1191(z^2+z^4)+2416z^3,
	\]
	\[
	A_8(z)=(1+z^7)+247(z+z^6)+4293(z^2+z^5)+15619(z^3+z^4),
	\]
		\[
	A_9(z)=(1+z^8)+502(z+z^7)+14608(z^2+z^6)+88234(z^3+z^5)+156190z^4,
	\]
		\[
	A_{10}(z)=(1+z^9)+1013(z+z^8)+47840(z^2+z^7)+455192(z^3+z^6)+1310354(z^4+z^5).
	\]
	For positive integers $m,n$, let $P_{m,n}$ be the ordinal sum of $P_m$ and $P_n$.
	Note that $P_{k,1}=Q_k$. So this is a naturally generalization of the family of posets $Q_k$ considered in Section \ref{sec:mainproof}.
	By Lemma \ref{lem:ordinal},
	one has 
	\[
	h^*(\Oc_{P_{m,n}},z)=h^*(\Oc_{P_m},z)\cdot h^*(\Oc_{P_{n}},z)=A_m(z)\cdot A_n(z).
	\]
	We then apply \eqref{equ:htoe} to compute Ehrhart polynomials of $\Oc_{P_{n,n}}$ for $6 \le n \le 10$, and Ehrhart polynomials of $\Oc_{P_{n,n+1}}$ for $6 \le n \le 9.$ The computation results are listed in Table \ref{tab:ehrhart}, from which we see that the first two order polytopes, $\Oc_{P_{6,6}}$ and $\Oc_{P_{6,7}}$, are Ehrhart-positive, and the rest of the order polytopes all have negative Ehrhart coefficients. Thus, $\Oc_{P_{7,7}},$ $\Oc_{P_{7,8}},$ $\dots,$ $\Oc_{P_{10,10}}$ are non-Ehrhart-positive order polytopes of dimension $14, 15, \dots, 20$, respectively. Unfortunately, the same construction does not give us examples of non-Ehrhart-positive order polytopes of dimension $12$ or $13.$
\commentout{	for $7 \leq n \leq 10$, 
	the Ehrhart polynomial $i(\Oc_{P_{n,n}},z)$ has a negative coefficient, and for  $7 \leq n \leq 9$,
		the Ehrhart polynomial $i(\Oc_{P_{n,n+1}},z)$ has a negative coefficient.
			However, $\Oc_{P_{6,6}}$ and $\Oc_{P_{6,7}}$ are Ehrhart-positive.}		
Moreover, we have verified that any order polytope $\Oc_P$ of dimension $12$ or $13$ is Ehrhart positive if $P$ can be expressed either as an ordinal sum of disjoint posets where each poset has at most $8$ elements, or as an ordinal sum of antichains (without the restriction on sizes).

\end{Example}

\begin{table}[t]
	\centering
	\caption{The Ehrhart polynomials of $\Oc_{P_{m,n}}$}
	\label{tab:ehrhart}
	\begin{tabular}{|l|l|}
		\hline
		$P$         & $i(\Oc_{P},t)$                                                                                                                                                                                                                                                                                                                                                                                                                                                                                                                                                                                                                                                           \\ \hline
		$P_{6,6}$   & \begin{tabular}[c]{@{}l@{}}$1+{\frac {75\,t}{22}}+{\frac {824\,{t}^{2}}{77}}+{\frac {160\,{t}^{3}}{7}}+{\frac {181\,{t}^{4}}{6}}+{\frac {765\,{t}^{5}}{28}}+{\frac {127\,{t}^{6}}{7}}+9\,{t}^{7}+{\frac {93\,{t}^{8}}{28}}+{\frac {25\,{t}^{9}}{28}}+\frac{1}{6}t^{10}+{\frac {3\,{t}^{11}}{154}}$\\$+{\frac {{t}^{12}}{924}}$\end{tabular}                                                                                                                                                                                                                                                                                                                                 \\ \hline
		$P_{6,7}$   & \begin{tabular}[c]{@{}l@{}}$1+{\frac {61751\,t}{15015}}+{\frac {555\,{t}^{2}}{44}}+{\frac {928\,{t}^{3}}{33}}+{\frac {83\,{t}^{4}}{2}}+{\frac {1273\,{t}^{5}}{30}}+{\frac {255\,{t}^{6}}{8}}+{\frac {127\,{t}^{7}}{7}}+ {\frac {63\,{t}^{8}}{8}}+{\frac {31\,{t}^{9}}{12}}+\frac{5}{8}{t}^{10}$\\$+{\frac {7\,{t}^{11}}{66}}+{\frac {{t}^{12}}{88}}+{\frac {{t}^{13}}{1716}}$\end{tabular}                                                                                                                                                                                                                                                                                     \\ \hline
		$P_{7,7}$   & \begin{tabular}[c]{@{}l@{}}$1-{\frac {3041\,t}{1430}}+{\frac {18397\,{t}^{2}}{4290}}+{\frac {1365\,{t}^{3}}{44}}+{\frac {602\,{t}^{4}}{11}}+{\frac {301\,{t}^{5}}{5}}+{\frac {8953\,{t}^{6}}{180}}+{\frac {255\,{t}^{7}}{8}}+{\frac {127\,{t}^{8}}{8}}+{\frac {49\,{t}^{9}}{8}}$\\$+{\frac {217\,{t}^{10}}{120}}+{\frac {35\,{t}^{11}}{88}}+{\frac {49\,{t}^{12}}{792}}+{\frac {7\,{t}^{13}}{1144}}+{\frac {{t}^{14}}{3432}}$\end{tabular}                                                                                                                                                                                                                              \\ \hline
		$P_{7,8}$   & \begin{tabular}[c]{@{}l@{}}$1-{\frac {1633\,t}{2145}}+{\frac {11261\,{t}^{2}}{2860}}+{\frac {208909\,{t}^{3}}{6435}}+{\frac {6125\,{t}^{4}}{88}}+{\frac {14441\,{t}^{5}}{165}}+{\frac {959\,{t}^{6}}{12}}+{\frac {5113\,{t}^{7}}{90}}+{\frac {255\,{t}^{8}}{8}}+{\frac {127\,{t}^{9}}{9}}$\\$+{\frac {49\,{t}^{10}}{10}}+{\frac {217\,{t}^{11}}{165}}+{\frac {35\,{t}^{12}}{132}}+{\frac {49\,{t}^{13}}{1287}}+{\frac {{t}^{14}}{286}}+{\frac {{t}^{15}}{6435}}$\end{tabular}                                                                                                                                                                                      \\ \hline
		$P_{8,8}$   & \begin{tabular}[c]{@{}l@{}}$1-{\frac {9905\,t}{286}}-{\frac {81704\,{t}^{2}}{2145}}+{\frac {18740\,{t}^{3}}{429}}+{\frac {137692\,{t}^{4}}{1287}}+{\frac {1358\,{t}^{5}}{11}}+{\frac {57736\,{t}^{6}}{495}}+{\frac {1364\,{t}^{7}}{15}}+{\frac {511\,{t}^{8}}{9}}+{\frac {85\,{t}^{9}}{3}}$\\$+{\frac {508\,{t}^{10}}{45}}+{\frac {196\,{t}^{11}}{55}}+{\frac {434\,{t}^{12}}{495}}+{\frac {70\,{t}^{13}}{429}}+{\frac {28\,{t}^{14}}{1287}}+{\frac {4\,{t}^{15}}{2145}}+{\frac {{t}^{16}}{12870}}$\end{tabular}                                                                                                                                                   \\ \hline
		$P_{8,9}$   & \begin{tabular}[c]{@{}l@{}}$1-{\frac {1063343\,t}{36465}}-{\frac {29713\,{t}^{2}}{572}}+{\frac {126224\,{t}^{3}}{6435}}+{\frac {17882\,{t}^{4}}{143}}+{\frac {75956\,{t}^{5}}{429}}+{\frac {1967\,{t}^{6}}{11}}+{\frac {24644\,{t}^{7}}{165}}+{\frac {1023\,{t}^{8}}{10}}$\\$+{\frac {511\,{t}^{9}}{9}}+{\frac {51\,{t}^{10}}{2}}+{\frac {508\,{t}^{11}}{55}}+{\frac {147\,{t}^{12}}{55}}+{\frac {434\,{t}^{13}}{715}}+{\frac {15\,{t}^{14}}{143}}+{\frac {28\,{t}^{15}}{2145}}+{\frac {3\,{t}^{16}}{2860}}+{\frac {{t}^{17}}{24310}}$\end{tabular}                                                                                                                \\ \hline
		$P_{9,9}$   & \begin{tabular}[c]{@{}l@{}}$1-{\frac {1285677\,t}{4862}}-{\frac {7364613\,{t}^{2}}{24310}}+{\frac{89157\,{t}^{3}}{572}}+{\frac {246946\,{t}^{4}}{715}}+{\frac {195381\,{t}^{5}}{715}}+{\frac {173242\,{t}^{6}}{715}}+{\frac {2439\,{t}^{7}}{11}}$\\$+{\frac {9204\,{t}^{8}}{55}}+{\frac {1023\,{t}^{9}}{10}}+{\frac {511\,{t}^{10}}{10}}+{\frac {459\,{t}^{11}}{22}}+{\frac {381\,{t}^{12}}{55}}+{\frac {1323\,{t}^{13}}{715}}+{\frac {279\,{t}^{14}}{715}}+{\frac {9\,{t}^{15}}{143}}+{\frac {21\,{t}^{16}}{2860}}$\\$+{\frac {27\,{t}^{17}}{48620}}+{\frac {{t}^{18}}{48620}}$\end{tabular}                                                                         \\ \hline
		$P_{9,10}$  & \begin{tabular}[c]{@{}l@{}}$1-{\frac {220154521\,t}{969969}}-{\frac {20069739\,{t}^{2}}{48620}}-{\frac {454951\,{t}^{3}}{12155}}+{\frac {453525\,{t}^{4}}{1144}}+{\frac {64414\,{t}^{5}}{143}}+{\frac {548577\,{t}^{6}}{1430}}+{\frac {1718664\,{t}^{7}}{5005}}$\\$+{\frac {3060\,{t}^{8}}{11}}+{\frac {12277\,{t}^{9}}{66}}+{\frac {1023\,{t}^{10}}{10}}+{\frac {511\,{t}^{11}}{11}}+{\frac {765\,{t}^{12}}{44}}+{\frac {762\,{t}^{13}}{143}}+{\frac {189\,{t}^{14}}{143}}+{\frac {186\,{t}^{15}}{715}}+{\frac {45\,{t}^{16}}{1144}}$\\$+{\frac {21\,{t}^{17}}{4862}}+{\frac {3\,{t}^{18}}{9724}}+{\frac {{t}^{19}}{92378}}$\end{tabular}                             \\ \hline
		$P_{10,10}$ & \begin{tabular}[c]{@{}l@{}}$1-{\frac {135276175\,t}{58786}}-{\frac {2250043660\,{t}^{2}}{969969}}+{\frac {4024062\,{t}^{3}}{2431}}+{\frac {11364453\,{t}^{4}}{4862}}+{\frac {461265\,{t}^{5}}{572}}+{\frac {150248\,{t}^{6}}{429}}$\\ $+{\frac {445884\,{t}^{7}}{1001}}+{\frac {426227\,{t}^{8}}{1001}}+{\frac {6825\,{t}^{9}}{22}}+{\frac {2047\,{t}^{10}}{11}}+93\,{t}^{11}+{\frac {2555\,{t}^{12}}{66}}+{\frac {3825\,{t}^{13}}{286}}+{\frac {3810\,{t}^{14}}{1001}}$\\$+{\frac {126\,{t}^{15}}{143}}+{\frac {93\,{t}^{16}}{572}}+{\frac {225\,{t}^{17}}{9724}}+{\frac {35\,{t}^{18}}{14586}}+{\frac {15\,{t}^{19}}{92378}}+{\frac {{t}^{20}}{184756}}$\end{tabular} \\ \hline
	\end{tabular}
\end{table}

\end{document}